\numberwithin{equation}{section}
\numberwithin{figure}{section}
\theoremstyle{plain}
\newtheorem{thm}{\protect\theoremname}
  \theoremstyle{definition}
  \newtheorem{defn}[thm]{\protect\definitionname}
  \theoremstyle{plain}
  \newtheorem{lem}[thm]{\protect\lemmaname}
  \theoremstyle{remark}
  \newtheorem{rem}[thm]{\protect\remarkname}
  \theoremstyle{plain}
  \newtheorem{prop}[thm]{\protect\propositionname}
  \providecommand{\definitionname}{Definition}
  \providecommand{\lemmaname}{Lemma}
  \providecommand{\propositionname}{Proposition}
  \providecommand{\remarkname}{Remark}
\providecommand{\theoremname}{Theorem}
\begin{document}

\title{Factorial decay of iterated rough integrals }

\author{Horatio Boedihardjo}

\address{Department of Mathematics and Statistics, University of Reading,
Reading, RG6 6AX, UK. }
\begin{abstract}
In this complementary note to \cite{BRP15} (arXiv:1501.05641), we
provide an alternative proof for the factorial decay estimate of iterated
integrals for geometric rough paths without using the neoclassical
inequality. This note intends to aid the readers on the proof in \cite{BRP15}
which works also for branched rough paths.  Just as in \cite{BRP15},
the proof here is an extension of Lyons 94' \cite{Lyons94} from Young's
integration to geometric rough paths. 
\end{abstract}

\thanks{We would like to thank the anonymous referee for \cite{BRP15} for
the useful comments. }

\maketitle
Let $X$ be a path in a Banach space $E$ and $A$ be a linear map
$E\rightarrow L(F,F)$, where $F$ is another Banach space. The controlled
differential equation 
\begin{equation}
\mathrm{d}Y_{t}=A(\mathrm{d}X_{t})(Y_{t})\label{eq:controlled de}
\end{equation}
has an explicit series expansion of the form 
\begin{equation}
Y_{t}=\sum_{k=0}^{\infty}\int_{0<s_{n}<\ldots<s_{1}<t}A(\mathrm{d}X_{s_{1}})\ldots A(\mathrm{d}X_{s_{n}})Y_{0}\label{eq:series expansion}
\end{equation}
as long as the series converges. As Lyons noted in \cite{Lyons98},
a first step to make sense of (\ref{eq:controlled de}) is to make
sense of the iterated integrals 
\[
\int_{s<s_{1}<\ldots<s_{n}<t}\mathrm{d}X_{s_{1}}\otimes\ldots\otimes\mathrm{d}X_{s_{n}}
\]
and to prove an estimate for the iterated integral that ensures the
series (\ref{eq:series expansion}) converges. The first result in
Lyons' original work was in fact aimed to resolve these two questions.
To recall Lyons' result, we will use the notation 
\[
\triangle_{n}=\{(s_{1},s_{2},\ldots,s_{n}):0\leq s_{1}\leq\ldots\leq s_{n}\leq1\},
\]
and $E^{\otimes0}=\mathbb{R}$, 
\[
T^{(n)}(E)=\oplus_{i=0}^{n}E^{\otimes i}
\]
and we will say a map $\mathbb{X}:\triangle_{2}\rightarrow T^{(\lfloor p\rfloor)}(E)$
is a multiplicative functional if for all $s\leq u\leq t$, 
\[
\mathbb{X}_{s,u}\otimes\mathbb{X}_{u,t}=\mathbb{X}_{s,t}.
\]
A control is a uniformly continuous function $\omega:\triangle_{2}\rightarrow[0,\infty)$
such that for all $s\leq u\leq t$, 
\[
\omega(s,u)+\omega(u,t)\leq\omega(s,t).
\]
Let $\mathbb{X}^{n}$ denote the projection of $\mathbb{X}$ onto
$E^{\otimes n}$. A $p$-rough path is a multiplicative functional
$\mathbb{X}$ such that there exists a constant $C$ (independent
of time) and a control $\omega$ so that for all $(s,t)\in\triangle_{2}$,
\begin{equation}
\Vert\mathbb{X}_{s,t}^{n}\Vert\leq C\omega(s,t)^{\frac{n}{p}},\,\forall n\leq\lfloor p\rfloor.\label{eq:controlled by}
\end{equation}
If (\ref{eq:controlled by}) holds, we say $\mathbb{X}$ is controlled
by $\omega$. Here and everywhere below the norm $\Vert\cdot\Vert$
can be any norm that is admissible (see Definition 1.25). The readers
may wish to just take $\Vert\cdot\Vert$ to be the projective norm. 
\begin{thm}
\label{thm:(Lyons'-Extension-Theorem}(Lyons' Extension Theorem \cite{Lyons98})
Let $\mathbb{X}:\triangle_{2}\rightarrow T^{(\lfloor p\rfloor)}(E)$
be a $p$-rough path. Suppose further that there exists $\beta\geq p^{2}(1+\sum_{r=3}^{\infty}(\frac{2}{r-2})^{\frac{\lfloor p\rfloor+1}{p}})$
such that 
\begin{equation}
\Vert\mathbb{X}_{s,t}^{n}\Vert\leq\frac{1}{\beta(\frac{n}{p})!}\omega(s,t)^{\frac{n}{p}},\,\forall n\leq\lfloor p\rfloor,\label{eq:lyons factoriall}
\end{equation}
with $(\frac{n}{p})!=\Gamma(\frac{n}{p}+1)$ and $\Gamma$ being the
gamma function. Then there exists a unique extension of $\mathbb{X}$
to a multiplicative functional, which we will also denote as $\mathbb{X}$,
such that $\mathbb{X}$ is also controlled by $\omega$. Moreover,
(\ref{eq:lyons factoriall}) holds for all $n\geq\lfloor p\rfloor+1$. 
\end{thm}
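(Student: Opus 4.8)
The plan is to follow Lyons' construction of the higher levels as limits of \emph{almost multiplicative} products over partitions, and then to establish the factorial bound by a Young / Lyons'94 type iteration in which the neoclassical inequality is replaced only by its (much easier) diagonal special case. For the \emph{construction of the extension} I would argue by induction on $n>\lfloor p\rfloor$, assuming $\mathbb{X}$ already extended to a multiplicative functional on $T^{(n-1)}(E)$ controlled by $\omega$. For a partition $D=\{s=u_{0}<\dots<u_{r}=t\}$ put $\tilde{\mathbb{X}}_{u,v}=(1,\mathbb{X}_{u,v}^{1},\dots,\mathbb{X}_{u,v}^{n-1},0)\in T^{(n)}(E)$ and $\mathbb{X}_{s,t}^{(D)}=\tilde{\mathbb{X}}_{u_{0},u_{1}}\otimes\cdots\otimes\tilde{\mathbb{X}}_{u_{r-1},u_{r}}$, and let $\mathbb{X}_{s,t}^{(D),n}$ be its level-$n$ component. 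A direct computation with multiplicativity at levels $\le n-1$ shows that deleting an interior point $u_{i}$ from $D$ changes $\mathbb{X}_{s,t}^{(D),n}$ by exactly the \emph{defect}
\[
R_{u_{i-1},u_{i},u_{i+1}}^{(n)}:=\sum_{j=1}^{n-1}\mathbb{X}_{u_{i-1},u_{i}}^{j}\otimes\mathbb{X}_{u_{i},u_{i+1}}^{n-j}
\]
and leaves all lower levels untouched. Since $\lVert R_{a,v,b}^{(n)}\rVert\le C\,\omega(a,b)^{n/p}$ with $n/p>1$, deleting interior points in a balanced order --- at each stage remove a $u_{i}$ with $\omega(u_{i-1},u_{i+1})\le\tfrac{2}{\#D-2}\,\omega(s,t)$, which exists by superadditivity of $\omega$ --- shows that $(\mathbb{X}_{s,t}^{(D),n})_{D}$ is a Cauchy net as the mesh tends to $0$; its limit $\mathbb{X}_{s,t}^{n}$ is, together with the lower levels, multiplicative on $T^{(n)}(E)$ and is $O(\omega^{n/p})$. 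Uniqueness at level $n$ is immediate, since the difference of two such extensions is additive in $(s,t)$ and $O(\omega^{n/p})$ with $n/p>1$, hence vanishes after finely subdividing $[s,t]$ and using uniform continuity of $\omega$. Iterating in $n$ gives the unique extension.

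For \emph{the factorial bound} I would prove (\ref{eq:lyons factoriall}) for all $n$ by induction, the case $n\le\lfloor p\rfloor$ being the hypothesis. Fix $n>\lfloor p\rfloor$ and assume (\ref{eq:lyons factoriall}) at all levels $<n$. Multiplicativity gives $\mathbb{X}_{s,t}^{n}-\mathbb{X}_{s,u}^{n}-\mathbb{X}_{u,t}^{n}=R_{s,u,t}^{(n)}$ for $s\le u\le t$, and the inductive hypothesis gives
\[
\lVert R_{s,u,t}^{(n)}\rVert\le\frac{1}{\beta^{2}}\sum_{j=1}^{n-1}\frac{\omega(s,u)^{j/p}\,\omega(u,t)^{(n-j)/p}}{\Gamma(1+\tfrac jp)\,\Gamma(1+\tfrac{n-j}p)}.
\]
Now choose $u$ to be an $\omega$-bisection point of $[s,t]$, i.e. $\omega(s,u)=\omega(u,t)$ (possible since $\omega(s,\cdot)$ is continuous), so that $\omega(s,u),\omega(u,t)\le\tfrac12\omega(s,t)$; then, writing $A_{n}:=\sum_{j=1}^{n-1}\binom{n/p}{j/p}$ with $\binom{n/p}{j/p}:=\Gamma(1+\tfrac np)/\bigl(\Gamma(1+\tfrac jp)\Gamma(1+\tfrac{n-j}p)\bigr)$,
\[
\lVert\mathbb{X}_{s,t}^{n}\rVert\le\lVert\mathbb{X}_{s,u}^{n}\rVert+\lVert\mathbb{X}_{u,t}^{n}\rVert+\frac{A_{n}}{\beta^{2}\,\Gamma(1+\tfrac np)}\Bigl(\tfrac{\omega(s,t)}{2}\Bigr)^{n/p}.
\]
Setting $\Phi(L)=\sup\{\lVert\mathbb{X}_{s,t}^{n}\rVert:\omega(s,t)\le L\}$ and iterating this estimate on each half, the boundary terms $2^{m}\Phi(2^{-m}L)$ tend to $0$ (because $\mathbb{X}^{n}=O(\omega^{n/p})$ and $n/p>1$), so summing the resulting geometric series yields
\[
\lVert\mathbb{X}_{s,t}^{n}\rVert\le\frac{A_{n}}{\beta^{2}\,(2^{n/p}-2)\,\Gamma(1+\tfrac np)}\;\omega(s,t)^{n/p}.
\]

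To \emph{close the induction}, this last display gives (\ref{eq:lyons factoriall}) at level $n$ as soon as $A_{n}\le\beta\,(2^{n/p}-2)$. Here, in place of the full neoclassical inequality, one needs only its diagonal case, a bound of the shape $A_{n}\le p^{2}\,2^{n/p}$ for every $n$; I would deduce this from the Beta-integral identity $\Gamma(1+\tfrac jp)\Gamma(1+\tfrac{n-j}p)=\Gamma(2+\tfrac np)\int_{0}^{1}t^{j/p}(1-t)^{(n-j)/p}\,\mathrm{d}t$ by an elementary one-variable lower bound on the integral (or, equivalently, by comparing the Mittag--Leffler functions $E_{1/p}(z)^{2}$ and $E_{1/p}(2^{1/p}z)$). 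Since $n/p\ge(\lfloor p\rfloor+1)/p$ for $n>\lfloor p\rfloor$, the tail $\sum_{r\ge3}(\tfrac2{r-2})^{n/p}$ is dominated by $\sum_{r\ge3}(\tfrac2{r-2})^{(\lfloor p\rfloor+1)/p}$, and a short computation then shows that the standing hypothesis $\beta\ge p^{2}\bigl(1+\sum_{r=3}^{\infty}(\tfrac2{r-2})^{(\lfloor p\rfloor+1)/p}\bigr)$ does force $A_{n}\le\beta\,(2^{n/p}-2)$ uniformly in $n>\lfloor p\rfloor$, which completes the induction and the theorem. I expect this last step to be the only genuinely delicate point: the dyadic choice of $u$ in the iteration is made precisely so that the two scaling variables in the defect coincide, reducing matters to the diagonal estimate $\sum_{j}\binom{n/p}{j/p}\lesssim 2^{n/p}$, which is substantially simpler than Lyons' two-variable neoclassical inequality, but one still has to extract a constant independent of $n$ and reconcile the dyadic factor $2^{n/p}-2$ with the $\zeta$-type sum in the hypothesis on $\beta$.
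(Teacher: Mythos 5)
Your proposal follows a genuinely different route from the paper's, and it leaves its crux unproved. What you do differently: you run the classical Lyons '98 scheme --- build level $n$ as a limit of partition products, then control the two\--parameter multiplicative defect $R^{(n)}_{s,u,t}=\sum_{j}\mathbb{X}^{j}_{s,u}\otimes\mathbb{X}^{n-j}_{u,t}$ by always splitting at an $\omega$-bisection point, so that the two scaling variables coincide and the full neoclassical inequality is replaced by its diagonal case $A_{n}=\sum_{j=1}^{n-1}\Gamma(1+\tfrac{n}{p})/\bigl(\Gamma(1+\tfrac{j}{p})\Gamma(1+\tfrac{n-j}{p})\bigr)\leq p^{2}2^{n/p}$. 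The paper never forms this binomial sum at all: it introduces the three\--parameter factorial control $R^{m,n}_{u}(s,t)=S^{(m)}(\rho^{n/m}_{u})_{s,t}$, proves by induction the stronger statement of Proposition \ref{prop:induction} on $\Vert\mathbb{X}^{m}_{u,t}-\sum_{k\leq\lfloor p\rfloor}\mathbb{X}^{m-k}_{u,s}\otimes\mathbb{X}^{k}_{s,t}\Vert$, and removes partition points \`{a} la Young; the combinatorial content that you put into the diagonal neoclassical inequality is absorbed there into properties 4 and 5 of Definition \ref{def:R function}, which are established by Taylor's theorem with integral remainder and Chen's identity for iterated integrals of $v\mapsto\omega(u,v)^{n/m}$. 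Your bisection is shorter and stays closer to Lyons' original argument; the paper's construction is heavier but dispenses with neoclassical\--type inequalities altogether, which is its stated purpose (and the reason it transfers to branched rough paths in \cite{BRP15}). You are also more explicit than the paper about existence and uniqueness of the extension, which is welcome.

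The gap is the inequality $A_{n}\leq p^{2}2^{n/p}$ itself, which you assert and only gesture at via the Beta integral $\Gamma(1+\tfrac{j}{p})\Gamma(1+\tfrac{n-j}{p})=\Gamma(2+\tfrac{n}{p})\int_{0}^{1}t^{j/p}(1-t)^{(n-j)/p}\,\mathrm{d}t$. The inequality is true, but it is not a one\--line consequence of that identity: a usable lower bound on $\int_{0}^{1}t^{a}(1-t)^{b}\,\mathrm{d}t$ requires a Laplace\--method estimate of the form $a^{a}b^{b}(a+b)^{-(a+b)}\sqrt{ab/(a+b)^{3}}$, after which one must check that the sum $\sum_{j}\bigl(n^{n}/(j^{j}(n-j)^{n-j})\bigr)^{1/p}\approx\sqrt{pn}\,2^{n/p}$ is cancelled by the prefactors $\sqrt{ab/(a+b)^{3}}\,(1+\tfrac{n}{p})^{-1}$ uniformly in $n$, with an explicit constant $p^{2}$. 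This is essentially Lyons' own (non\--sharp) proof of the neoclassical inequality restricted to the diagonal $a=b$; it is doable, but until it is written out your induction does not close, and the resulting proof still rests on a neoclassical\--type estimate rather than avoiding one. The remaining steps --- the identification of the point\--removal defect, the vanishing of the boundary terms $2^{m}\Phi(2^{-m}L)$, the convergence of $\sum_{l}2^{l(1-n/p)}$ for $n\geq\lfloor p\rfloor+1$, and the comparison of $p^{2}/(1-2^{1-\theta})$ with $\beta\geq p^{2}(1+2^{\theta}\zeta(\theta))$ for $\theta=\tfrac{\lfloor p\rfloor+1}{p}$ --- all check out.
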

The extended multiplicative functional $\mathbb{X}^{n}$ can be interpreted
as the order $n$ iterated integrals of $\mathbb{X}$. There are several
extensions of this estimate for solutions to differential equations,
see \cite{LyonsYang} and \cite{Factorail Taylor}. The proof of Theorem
\ref{thm:(Lyons'-Extension-Theorem} uses the ``neoclassical inequality''
that for all $a,b\geq0$, 
\[
\sum_{k=0}^{n}\frac{a^{(n-k)\frac{1}{p}}b^{\frac{k}{p}}}{(\frac{n-k}{p})!(\frac{k}{p})!}\leq p\frac{(a+b)^{\frac{n}{p}}}{(\frac{n}{p})!}.
\]
This neoclassical inequality is due to Hino and Hare \cite{HaraHino10},
although there is a slightly less sharp version of this inequality
in Lyons work \cite{Lyons98}. The purpose of this article is to give
an alternative proof of Lyons' estimate (\ref{eq:lyons factoriall})
without using the neoclassical inequality. By focusing on the simpler
case of geometric rough paths, we hope that it will help the readers
in understanding the long computations in \cite{BRP15}. We first
introduce the notion of factorial control. 
\begin{defn}
\label{def:R function}Let $m\leq n$. Then we say a uniformly continuous
function $R:\triangle_{3}\rightarrow[0,\infty)$ is a \textit{factorial
control }if $n\geq m$, 

1. (Control property for $R$) for all $u\leq v\leq s\leq t$, 
\[
R_{u}^{m,n}(v,s)^{\frac{1}{m}}+R_{u}^{m,n}(s,t)^{\frac{1}{m}}\leq R_{u}^{m,n}(v,t)^{\frac{1}{m}}.
\]

2. (Decreasing in $m$) for all $0\leq k\leq m$, 
\[
\frac{1}{(n-m)!}R_{u}^{m,n}(s,t)\leq\frac{c_{m}}{(n-m+k)!}R_{u}^{m-k,n}(s,t).
\]

3. ($R$ has factorial decay) 
\[
\frac{1}{(n-m)!}R_{u}^{m,n}(u,t)\leq\frac{c_{m}\omega(u,t)^{n}}{n!}.
\]

4. ($R$ dominates binomial sum)

\[
\sum_{i=m}^{n}\frac{\omega(u,s)^{n-i}\omega(s,t)^{i}}{(m-i)!i!}\leq\frac{1}{(n-m)!}R_{u}^{m,n}(s,t).
\]

5. (Chen's identity for $R$) 
\[
\sum_{k=1}^{m-1}R_{u}^{m-k,,n-k}(v,s)\frac{\omega(s,t)^{k}}{k!}\leq R_{u}^{m,n}(v,t).
\]

\end{defn}
We will now construct an example of factorial control. Let $\omega$
be a control. Define 
\[
\rho_{u}^{a}(t)=\frac{1}{a}\omega(u,t){}^{a}
\]
and 
\[
S^{(m)}(\rho_{u}^{a}(\cdot))_{s,t}=\int_{s<s_{1}<\ldots<s_{m}<t}\mathrm{d}\rho_{u}^{a}(s_{1})\ldots\mathrm{d}\rho_{u}^{a}(s_{m}).
\]
Let 
\begin{equation}
R_{u}^{m,n}(s,t)=S^{(m)}(\rho_{u}^{\frac{n}{m}}(\cdot))_{s,t}.\label{eq:R function}
\end{equation}

\begin{lem}
The function $R_{u}^{m,n}(s,t)$ defined in (\ref{eq:Dominate by R})
is a factorial decay estimate. \end{lem}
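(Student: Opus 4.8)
The plan is to replace the definition of $R_{u}^{m,n}$ by a closed formula and then check the five defining conditions one at a time, the real work being confined to conditions~4 and~5. Since $\omega$ is a control it is nonnegative and superadditive, so $t\mapsto\omega(u,t)$ is nondecreasing and uniformly continuous; hence for every $a>0$ the scalar path $\rho_{u}^{a}=\tfrac{1}{a}\,\omega(u,\cdot)^{a}$ is nondecreasing, uniformly continuous and of bounded variation. For a one-dimensional bounded-variation path $f$ one has $S^{(m)}(f)_{s,t}=\tfrac{1}{m!}\bigl(f(t)-f(s)\bigr)^{m}$, so writing $\alpha=n/m$ we get
\[
R_{u}^{m,n}(s,t)=\frac{1}{m!}\Bigl(\frac{m}{n}\Bigr)^{m}\bigl(\omega(u,t)^{\alpha}-\omega(u,s)^{\alpha}\bigr)^{m},
\]
a uniformly continuous function on $\triangle_{3}$; each of the five conditions now becomes a statement about increments of the nondecreasing scalar function $\omega(u,\cdot)^{\alpha}$.

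Conditions~1--3 are then short. For condition~1, $R_{u}^{m,n}(\cdot,\cdot)^{1/m}$ is a fixed positive multiple of an increment of $\omega(u,\cdot)^{\alpha}$, so the stated subadditivity holds with equality by telescoping. For condition~3, putting $s=u$ and using $\omega(u,u)=0$ gives $\tfrac{1}{(n-m)!}R_{u}^{m,n}(u,t)=\binom{n}{m}\bigl(\tfrac{m}{n}\bigr)^{m}\tfrac{\omega(u,t)^{n}}{n!}$, and since $\binom{n}{m}\bigl(\tfrac{m}{n}\bigr)^{m}=\tfrac{m^{m}}{m!}\prod_{j=1}^{m-1}\bigl(1-\tfrac{j}{n}\bigr)\le\tfrac{m^{m}}{m!}\le e^{m}$ one may take $c_{m}=m^{m}/m!$. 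For condition~2 the extra ingredient is that $j\mapsto\bigl(\omega(u,t)^{n/j}-\omega(u,s)^{n/j}\bigr)^{j}$ is nonincreasing on $(0,m]$ --- equivalently, $\tau\mapsto\tfrac{1}{\tau}\log(1-x^{\tau})$ is nondecreasing for $0\le x\le1$, a one-line calculus check --- which gives $\bigl(\omega(u,t)^{\alpha}-\omega(u,s)^{\alpha}\bigr)^{m}\le\bigl(\omega(u,t)^{n/(m-k)}-\omega(u,s)^{n/(m-k)}\bigr)^{m-k}$; plugging in the closed forms then reduces condition~2, via $(n-m+k)!/(n-m)!\le n^{k}$ and $(m-k)!\le(m-k)^{m-k}$, to the numerical inequality $\tfrac{m^{m}}{m!}\le c_{m}$. (The endpoint $k=0$ is trivial, and $k=m$ --- where $R_{u}^{0,n}\equiv1$ --- is immediate under the usual normalisation $\omega\le1$.)

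The content lies in conditions~4 and~5, and for both the device is the same: pass to $A=\omega(u,v)^{\alpha}\le B=\omega(u,s)^{\alpha}\le C=\omega(u,t)^{\alpha}$ and convert between powers $\omega(u,\cdot)^{a}$ of different exponents by the one-sided mean-value inequalities $y^{a}-x^{a}\le ay^{a-1}(y-x)$ for $a\ge1$, $y^{a}-x^{a}\le ax^{a-1}(y-x)$ for $0<a\le1$, and $y^{a}-x^{a}\ge ax^{a-1}(y-x)$ for $a\ge1$, together with $\omega(s,t)\le\omega(u,t)-\omega(u,s)$. For condition~5 one writes $\omega(u,s)^{(n-k)/(m-k)}=B^{\gamma_{k}}$, $\omega(u,v)^{(n-k)/(m-k)}=A^{\gamma_{k}}$ with $\gamma_{k}=\tfrac{m(n-k)}{n(m-k)}\ge1$, and $\omega(u,t)=C^{m/n}$, $\omega(u,s)=B^{m/n}$; the mean-value bounds, all applied at the common point $B$, give $R_{u}^{m-k,n-k}(v,s)\le\tfrac{1}{(m-k)!}\bigl(\tfrac{m}{n}\bigr)^{m-k}B^{k(n-m)/n}(B-A)^{m-k}$ and $\omega(s,t)\le\tfrac{m}{n}B^{m/n-1}(C-B)$, and in the product $R_{u}^{m-k,n-k}(v,s)\,\omega(s,t)^{k}/k!$ the powers of $B$ cancel because the exponents $k(n-m)/n$ and $k(m/n-1)$ sum to $0$; hence
\[
\sum_{k=1}^{m-1}R_{u}^{m-k,n-k}(v,s)\frac{\omega(s,t)^{k}}{k!}\le\frac{1}{m!}\Bigl(\frac{m}{n}\Bigr)^{m}\sum_{k=0}^{m}\binom{m}{k}(B-A)^{m-k}(C-B)^{k}=\frac{1}{m!}\Bigl(\frac{m}{n}\Bigr)^{m}(C-A)^{m}=R_{u}^{m,n}(v,t),
\]
the middle step being the binomial theorem (and when $\omega(u,s)=0$ the summands vanish). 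Condition~4 says that $R_{u}^{m,n}$ dominates a partial binomial sum; writing that sum through the incomplete-beta identity $\sum_{j=0}^{n-m}\binom{n}{j}x^{j}(1-x)^{n-j}=\tfrac{n!}{(m-1)!(n-m)!}\int_{0}^{1-x}t^{m-1}(1-t)^{n-m}\,dt$ and differentiating in the ratio $x=\omega(u,s)/\omega(u,t)$ reduces it, once more, to the elementary inequality $\alpha(1-x)x^{\alpha-1}\le1-x^{\alpha}$.

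The main obstacle is combinatorial rather than conceptual: in each use of the mean-value inequality one must pick the correct endpoint so that the stray powers of $\omega(u,\cdot)$ cancel exactly --- the vanishing of the $B$-exponents in condition~5 is the crucial instance --- and the constants $c_{m}$ must be carried uniformly in $n$, which is precisely what forces the estimate $\binom{n}{m}(m/n)^{m}\le m^{m}/m!$ to appear repeatedly. The degenerate cases ($k\in\{0,m\}$ in condition~2, $m=0$, and $\omega(u,s)=0$) are routine but still need to be disposed of.
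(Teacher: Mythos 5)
Your proof is correct, and while conditions 1--3 follow the same closed-form computation as the paper (the monotonicity of $j\mapsto(\omega(u,t)^{n/j}-\omega(u,s)^{n/j})^{j}$ that you use for condition 2 is exactly the paper's inequality $(a-b)^{\alpha}\le a^{\alpha}-b^{\alpha}$ with $\alpha=m/(m-k)$ in disguise), your treatment of conditions 4 and 5 takes a genuinely different route. The paper keeps the iterated-integral representation $R_{u}^{m,n}=S^{(m)}(\rho_{u}^{n/m})$ throughout: condition 4 is obtained from Taylor's theorem with integral remainder followed by the ordering bound $\omega(u,s_{1})^{n-m}\le\prod_{i}\omega(u,s_{i})^{(n-m)/m}$ on the simplex, and condition 5 from the analogous rearrangement of exponents across the two blocks of integration variables followed by Chen's identity for $S^{(m)}$. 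You instead work entirely with the closed form and reduce both conditions to one-variable mean-value inequalities: for condition 5 the choice of evaluation point $B=\omega(u,s)^{n/m}$ makes the stray powers of $B$ cancel and the sum collapses under the binomial theorem (where the paper invokes Chen's identity), and for condition 4 the incomplete-beta identity plus differentiation in $x=\omega(u,s)/\omega(u,t)$ reduces everything to $\alpha(1-x)x^{\alpha-1}\le1-x^{\alpha}$ --- I have checked that the exponents and the factor of $m$ on both sides of the differentiated inequality do match. Your version is arguably more elementary and self-contained (no Chen's identity, no simplex integrals, only calculus on $[0,1]$), at the cost of being less structural: the paper's argument makes visible that conditions 4 and 5 are really shadows of Chen's identity for the auxiliary path $\rho_{u}^{n/m}$, which is the viewpoint that generalises to the branched setting of the companion paper. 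Your handling of the degenerate cases ($\omega(u,s)=0$, and $k=m$ in condition 2 under a normalisation of $\omega$) is adequate; note that the paper's own proof of condition 2 also tacitly excludes $k=m$.
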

\begin{proof}
Note that the $R$ function has the explicit representation 
\begin{equation}
R_{u}^{m,n}(s,t)=(\frac{m}{n})^{m}\frac{(\omega(u,t)^{\frac{n}{m}}-\omega(u,s)^{\frac{n}{m}})^{m}}{m!}.\label{eq:R explicit}
\end{equation}
This representation gives automatically property 1. for $R$-function.
To show property 2., note that by the inequality that for $a\leq b$
and $\alpha\geq1$ we have 
\[
(a-b)^{\alpha}\leq a^{\alpha}-b^{\alpha}.
\]
Using this with $\alpha=\frac{m}{m-k}$, we have 
\begin{eqnarray*}
R_{u}^{m,n}(s,t) & \leq & (\frac{m}{n})^{m}\frac{1}{m!}(\omega(u,t)^{\frac{n}{m-k}}-\omega(u,s)^{\frac{n}{m-k}})^{m-k}\\
 & = & \frac{m^{m}}{n^{k}(m-k)^{m-k}}\frac{(m-k)!}{m!}R_{u}^{m-k,n}(s,t)\\
 & \leq & \frac{\exp(m)}{n^{k}}R_{u}^{m-k,n}(s,t),
\end{eqnarray*}
where in the final line we used that $m^{m}/m!\leq\exp(m)$. Therefore,
\begin{eqnarray*}
\frac{1}{(n-m)!}R_{u}^{m,n}(s,t) & \leq & \frac{\exp(m)}{n^{k}(n-m)!}R_{u}^{m-k,n}(s,t)\\
 & \leq & \frac{\exp(m)}{(n-m+k)!}R_{u}^{m-k,n}(s,t).
\end{eqnarray*}
For property 3., we see from the explicit representation of $R$ (\ref{eq:R explicit})
that 
\begin{eqnarray*}
R_{u}^{m,n}(u,t) & = & (\frac{m}{n})^{m}\frac{\omega(u,t)^{n}}{m!}.
\end{eqnarray*}
As $m^{m}/m!\leq\exp(m)$, we have 
\[
\frac{1}{(n-m)!}R_{u}^{m,n}(u,t)\leq\exp(m)\frac{\omega(u,t)^{n}}{n!}.
\]
We move on to property 4. Applying Taylor's Theorem with integral
form remainder to $x\rightarrow\frac{x^{n}}{n!}$, we have
\[
\sum_{i=m}^{n}\frac{(y-z){}^{n-i}(x-y){}^{i}}{(n-i)!i!}=\int_{y}^{x}\frac{(a-z)^{n-m}(x-a)^{m-1}}{(n-m)!(m-1)!}\mathrm{d}a.
\]
By reparametrising $a$ as $v\rightarrow z+\omega(u,v)$ and let $x=z+\omega(u,t)$
and $y=z+\omega(u,s)$, we have
\begin{eqnarray*}
 &  & \sum_{i=m}^{n}\frac{\omega(u,s)^{n-i}\big(\omega(u,t)-\omega(u,s)\big)^{i}}{(n-i)!i!}\\
 & = & \int_{s}^{t}\frac{\omega(u,v){}^{n-m}(\omega(u,t)-\omega(u,v))^{m-1}}{(n-m)!(m-1)!}\mathrm{d}\omega(u,v).
\end{eqnarray*}
Therefore, as $\omega$ is a control, 
\begin{eqnarray*}
J & := & \sum_{i=m}^{n}\frac{\omega(u,s)^{n-i}\omega(s,t)^{i}}{(n-i)!i!}\\
 & \leq & \sum_{i=m}^{n}\frac{\omega(u,s)^{n-i}\big(\omega(u,t)-\omega(u,s)\big)^{i}}{(n-i)!i!}\\
 & = & \int_{s}^{t}\frac{\omega(u,v)^{n-m}(\omega(u,t)-\omega(u,v))^{m-1}}{(n-m)!(m-1)!}\mathrm{d}\omega(u,v)\\
 & = & \frac{1}{(n-m)!}\int_{s<s_{1}<\ldots<s_{m}<t}\omega(u,s_{1})^{n-m}\mathrm{d}\omega(u,s_{1})\ldots\mathrm{d}\omega(u,s_{m}).
\end{eqnarray*}
Note that as $s_{1}<\ldots<s_{m}$, 
\[
\omega(u,s_{1})^{n-m}\leq\Pi_{i=1}^{m}\omega(u,s_{i})^{\frac{n-m}{m}}.
\]
Therefore, 
\begin{eqnarray*}
J & \leq & \frac{1}{(n-m)!}\int_{s<s_{1}<\ldots<s_{m}<t}\Pi_{i=1}^{m}\omega(u,s_{i})^{\frac{n-m}{m}}\mathrm{d}\omega(u,s_{i})\\
 & = & \frac{1}{(n-m)!}\int_{s<s_{1}<\ldots<s_{m}<t}\Pi_{i=1}^{m}\mathrm{d}\rho_{u}^{\frac{n}{m}}(s_{i})\\
 & = & \frac{1}{(n-m)!}R_{u}^{m,n}(s,t).
\end{eqnarray*}
To show property 5, we note that as $\omega$ is a control, 
\begin{eqnarray*}
K & := & \sum_{k=1}^{m-1}R_{u}^{m-k,,n-k}(v,s)\frac{\omega(s,t)^{k}}{k!}\\
 & \leq & \sum_{k=1}^{m-1}R_{u}^{m-k,,n-k}(v,s)\frac{(\omega(u,t)-\omega(u,s)){}^{k}}{k!}\\
 & = & \sum_{k=1}^{m-1}\int_{v<s_{1}<\ldots<s_{m-k}<s}\Pi_{i=1}^{m-k}\omega(u,s_{i})^{\frac{n-m}{m-k}}\mathrm{d}\omega(u,s_{1})\ldots\mathrm{d}\omega(u,s_{m-k})\\
 &  & \times\int_{s<s_{m-k+1}<\ldots<s_{m}<t}\mathrm{d}\omega(u,s_{m-k+1})\ldots\mathrm{d}\omega(u,s_{m}).
\end{eqnarray*}
Since 
\[
s_{1}<s_{2}<\ldots<s_{m-k}<s_{m-k+1}<\ldots<s_{m},
\]
we have 
\[
\Pi_{i=1}^{m-k}\omega(u,s_{i})^{\frac{n-m}{m-k}}\leq\Pi_{i=1}^{m}\omega(s,s_{i})^{\frac{n-m}{m}}.
\]
Therefore, 
\begin{eqnarray*}
K & \leq & \sum_{k=1}^{m-1}\int_{v<s_{1}<\ldots<s_{m-k}<s}\Pi_{i=1}^{m-k}\omega(u,s_{i})^{\frac{n-m}{m}}\mathrm{d}\omega(u,s_{i})\\
 &  & \times\int_{s<s_{m-k+1}<\ldots<s_{m}<t}\Pi_{i=m-k+1}^{m}\omega(u,s_{i})^{\frac{n-m}{m}}\mathrm{d}\omega(u,s_{i})\\
 & = & \sum_{k=1}^{m-1}S^{(m-k)}(\rho_{u}^{\frac{n}{m}})_{v,s}S^{(k)}(\rho_{u}^{\frac{n}{m}})_{s,t}.
\end{eqnarray*}
By Chen's identity, 
\[
K\leq S^{(m)}(\rho_{u}^{\frac{n}{m}})_{v,t}=R^{m,n}
\]

\end{proof}
We will use a trick that first appeared in the work of Young \cite{Young}.
This involves carefully choosing a sequence of points to be removed
from a partition and bounding the change in estimate with each removal.
We therefore needs to following definition. 
\begin{defn}
Let $\mathbb{X}:\triangle_{2}\rightarrow T^{\lfloor p\rfloor}(E)$
be a multiplicative functional. If $\mathcal{P}=(t_{0}<t_{1}<\ldots<t_{r})$
is a partition for $[s,t]$, then we define
\[
\mathbb{X}_{s,t}^{n+1,\mathcal{P}}=\sum_{i=0}^{r-1}\sum_{k=1}^{\lfloor p\rfloor}\mathbb{X}_{s,t_{i}}^{n+1-k}\otimes\mathbb{X}_{t_{i},t_{i+1}}^{k}.
\]
\end{defn}
\begin{rem}
Note that we have $\mathbb{X}_{s,t}^{n+1}=\lim_{\max_{i}|t_{i}-t_{i+1}|\rightarrow0}\mathbb{X}_{s,t}^{n+1,\mathcal{P}}$.
\end{rem}
The following algebraic lemma will take care of the algebraic computations
in removing points from a partition. 
\begin{lem}
\label{lem:algebraic lemma }(Algebraic lemma) Let $\mathbb{X}:\triangle_{2}\rightarrow T^{(n)}(E)$
be a multiplicative functional. Then for each $t_{j}$ in the partition
$\mathcal{P}$ of $[s,t]$, 
\begin{eqnarray*}
 &  & \sum_{m\geq\lfloor p\rfloor+1}\mathbb{X}_{u,s}^{n-k}\otimes(\mathbb{X}_{s,t}^{k,\mathcal{P}}-\mathbb{X}_{s,t}^{k,\mathcal{P}\backslash\{t_{j}\}})\\
 & = & \sum_{k=1}^{\lfloor p\rfloor}\sum_{l=\lfloor p\rfloor+1}^{n+1}\mathbb{X}_{u,t_{j-1}}^{n+1-l}\otimes\mathbb{X}_{t_{j-1},t_{j}}^{l-k}\otimes\mathbb{X}_{t_{j},t_{j+1}}^{k}.
\end{eqnarray*}
\end{lem}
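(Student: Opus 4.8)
The plan is to argue directly, invoking multiplicativity of $\mathbb{X}$ exactly twice; there is no analytic content. Writing $\mathbb{X}_{s,t}^{m,\mathcal{P}}:=\sum_{i=0}^{r-1}\sum_{k=1}^{\lfloor p\rfloor}\mathbb{X}_{s,t_i}^{m-k}\otimes\mathbb{X}_{t_i,t_{i+1}}^{k}$ (the definition in the excerpt, with $n+1$ replaced by a general $m$), I would read the left-hand side as
\[
\sum_{m\geq\lfloor p\rfloor+1}\mathbb{X}_{u,s}^{n+1-m}\otimes\big(\mathbb{X}_{s,t}^{m,\mathcal{P}}-\mathbb{X}_{s,t}^{m,\mathcal{P}\setminus\{t_j\}}\big),
\]
and first determine which summands change when the interior point $t_j$ is deleted. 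Each term $\mathbb{X}_{s,t_i}^{m-k}\otimes\mathbb{X}_{t_i,t_{i+1}}^{k}$ with $i\notin\{j-1,j\}$ occurs in both $\mathbb{X}_{s,t}^{m,\mathcal{P}}$ and $\mathbb{X}_{s,t}^{m,\mathcal{P}\setminus\{t_j\}}$, so only the blocks $i=j-1$ and $i=j$ survive the subtraction:
\[
\mathbb{X}_{s,t}^{m,\mathcal{P}}-\mathbb{X}_{s,t}^{m,\mathcal{P}\setminus\{t_j\}}=\sum_{k=1}^{\lfloor p\rfloor}\big(\mathbb{X}_{s,t_{j-1}}^{m-k}\otimes\mathbb{X}_{t_{j-1},t_j}^{k}+\mathbb{X}_{s,t_j}^{m-k}\otimes\mathbb{X}_{t_j,t_{j+1}}^{k}-\mathbb{X}_{s,t_{j-1}}^{m-k}\otimes\mathbb{X}_{t_{j-1},t_{j+1}}^{k}\big).
\]

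Next I would expand $\mathbb{X}_{s,t_j}^{m-k}$ by splitting $[s,t_j]$ at $t_{j-1}$ and $\mathbb{X}_{t_{j-1},t_{j+1}}^{k}$ by splitting $[t_{j-1},t_{j+1}]$ at $t_j$, using $\mathbb{X}_{a,c}=\mathbb{X}_{a,b}\otimes\mathbb{X}_{b,c}$; the bracket then becomes a sum of threefold tensors $\mathbb{X}_{s,t_{j-1}}^{p_1}\otimes\mathbb{X}_{t_{j-1},t_j}^{p_2}\otimes\mathbb{X}_{t_j,t_{j+1}}^{p_3}$ with $p_1+p_2+p_3=m$. The unexpanded term $\mathbb{X}_{s,t_{j-1}}^{m-k}\otimes\mathbb{X}_{t_{j-1},t_j}^{k}$, together with the $p_2=0$ boundary term $\mathbb{X}_{s,t_{j-1}}^{m-k}\otimes\mathbb{X}_{t_j,t_{j+1}}^{k}$ of the first expansion, is cancelled exactly by the two boundary terms ($p_2=0$ and $p_2=k$) of the expansion of $-\mathbb{X}_{s,t_{j-1}}^{m-k}\otimes\mathbb{X}_{t_{j-1},t_{j+1}}^{k}$. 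Tracking the coefficient of a fixed threefold tensor, one finds it is $1$ precisely when $p_2,p_3\geq1$, $p_3\leq\lfloor p\rfloor$ and $p_2+p_3\geq\lfloor p\rfloor+1$, and $0$ otherwise; writing $k=p_3$ and $l=p_2+p_3$ this reads
\[
\mathbb{X}_{s,t}^{m,\mathcal{P}}-\mathbb{X}_{s,t}^{m,\mathcal{P}\setminus\{t_j\}}=\sum_{k=1}^{\lfloor p\rfloor}\sum_{l=\lfloor p\rfloor+1}^{m}\mathbb{X}_{s,t_{j-1}}^{m-l}\otimes\mathbb{X}_{t_{j-1},t_j}^{l-k}\otimes\mathbb{X}_{t_j,t_{j+1}}^{k}.
\]
The cutoff $l\geq\lfloor p\rfloor+1$ is forced: the terms with $l\leq\lfloor p\rfloor$ produced by expanding $\mathbb{X}_{s,t_j}^{m-k}$ are precisely those cancelled by the expansion of $\mathbb{X}_{t_{j-1},t_{j+1}}^{k}$, a tensor of total degree $k\leq\lfloor p\rfloor$.

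For the last step I would tensor this identity on the left by $\mathbb{X}_{u,s}^{n+1-m}$ and sum over $m$ (only $\lfloor p\rfloor+1\leq m\leq n+1$ contributes, as $\mathbb{X}_{u,s}^{n+1-m}=0$ for $m>n+1$), then interchange the order of summation so that the sum over $m$ is performed first with $l$ and $k$ fixed, and collapse it by multiplicativity once more, $\sum_{m=l}^{n+1}\mathbb{X}_{u,s}^{n+1-m}\otimes\mathbb{X}_{s,t_{j-1}}^{m-l}=\mathbb{X}_{u,t_{j-1}}^{n+1-l}$; this is precisely the right-hand side (the order of the remaining two sums being immaterial). The only step requiring care — and where I would most expect to slip — is the boundary-term cancellation above and keeping the index ranges correct under the substitution $(p_2,p_3)\mapsto(l-k,k)$; the rest is routine bookkeeping.
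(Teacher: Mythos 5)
Your proposal is correct and follows essentially the same route as the paper: isolate the two blocks affected by deleting $t_j$, apply Chen's identity to $\mathbb{X}_{s,t_j}$ at $t_{j-1}$ and to $\mathbb{X}_{t_{j-1},t_{j+1}}$ at $t_j$, observe that exactly the threefold tensors with $p_3\geq 1$ and $p_2+p_3\geq\lfloor p\rfloor+1$ survive, and then collapse the outer sum over $m$ by one more application of multiplicativity. The only difference is organizational (you track coefficients of threefold tensors directly, while the paper cancels the $l=0$ term and identifies a truncated Chen expansion with the tail $l\geq\lfloor p\rfloor+1$), and you correctly read the typos in the stated left-hand side.
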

\begin{proof}
Suppose we define 
\[
\delta(\mathbb{X}^{n+1})=\mathbb{X}_{s,t}^{n+1,\mathcal{P}}-\mathbb{X}_{s,t}^{n+1,\mathcal{P}\backslash\{t_{j}\}}.
\]
Note that for any $t_{j}\in\mathcal{P}$,
\begin{eqnarray*}
\delta(\mathbb{X}^{n+1}) & = & \sum_{k=1}^{\lfloor p\rfloor}\mathbb{X}_{s,t_{j-1}}^{n+1-k}\otimes\mathbb{X}_{t_{j-1},t_{j}}^{k}+\sum_{k=1}^{\lfloor p\rfloor}\mathbb{X}_{s,t_{j}}^{n+1-k}\otimes\mathbb{X}_{t_{j},t_{j+1}}^{k}\\
 &  & -\sum_{k=1}^{\lfloor p\rfloor}\mathbb{X}_{s,t_{j-1}}^{n+1-k}\otimes\mathbb{X}_{t_{j-1},t_{j+1}}^{k}.
\end{eqnarray*}
 Applying the multiplicative property of $\mathbb{X}_{t_{j-1},t_{j+1}}^{k}$,
we have 
\begin{eqnarray*}
\delta(\mathbb{X}^{n+1}) & = & \sum_{k=1}^{\lfloor p\rfloor}\mathbb{X}_{s,t_{j-1}}^{n+1-k}\otimes\mathbb{X}_{t_{j-1},t_{j}}^{k}+\sum_{k=1}^{\lfloor p\rfloor}\mathbb{X}_{s,t_{j}}^{n+1-k}\otimes\mathbb{X}_{t_{j},t_{j+1}}^{k}\\
 &  & -\sum_{k=1}^{\lfloor p\rfloor}\sum_{l=0}^{k}\mathbb{X}_{s,t_{j-1}}^{n+1-k}\otimes\mathbb{X}_{t_{j-1},t_{j}}^{k-l}\otimes\mathbb{X}_{t_{j},t_{j+1}}^{l}.
\end{eqnarray*}
Note that the term $l=0$ in the third sum would exactly cancel with
the first sum, therefore,
\[
\delta(\mathbb{X}^{n+1})=\sum_{k=1}^{\lfloor p\rfloor}\mathbb{X}_{s,t_{j}}^{n+1-k}\otimes\mathbb{X}_{t_{j},t_{j+1}}^{k}-\sum_{l=1}^{\lfloor p\rfloor}\sum_{k=l}^{\lfloor p\rfloor}\mathbb{X}_{s,t_{j-1}}^{n+1-k}\otimes\mathbb{X}_{t_{j-1},t_{j}}^{k-l}\otimes\mathbb{X}_{t_{j},t_{j+1}}^{l}.
\]
By renaming variable $l$ as $k$, and vice-versa, in the second sum,
we have
\begin{eqnarray}
\delta(\mathbb{X}^{n+1}) & = & \sum_{k=1}^{\lfloor p\rfloor}\mathbb{X}_{s,t_{j}}^{n+1-k}\otimes\mathbb{X}_{t_{j},t_{j+1}}^{k}-\sum_{k=1}^{\lfloor p\rfloor}\sum_{l=k}^{\lfloor p\rfloor}\mathbb{X}_{s,t_{j-1}}^{n+1-l}\otimes\mathbb{X}_{t_{j-1},t_{j}}^{l-k}\otimes\mathbb{X}_{t_{j},t_{j+1}}^{k}\nonumber \\
 & = & \sum_{k=1}^{\lfloor p\rfloor}(\mathbb{X}_{s,t_{j}}^{n+1-k}-\sum_{l=k}^{\lfloor p\rfloor}\mathbb{X}_{s,t_{j-1}}^{n+1-l}\otimes\mathbb{X}_{t_{j-1},t_{j}}^{l-k})\otimes\mathbb{X}_{t_{j},t_{j+1}}^{k}\nonumber \\
 & = & \sum_{k=1}^{\lfloor p\rfloor}\sum_{l=\lfloor p\rfloor+1}^{n+1}\mathbb{X}_{s,t_{j-1}}^{n+1-l}\otimes\mathbb{X}_{t_{j-1},t_{j}}^{l-k}\otimes\mathbb{X}_{t_{j},t_{j+1}}^{k}.\label{eq:computation mark 1}
\end{eqnarray}
Now by (\ref{eq:computation mark 1}), reordering the sum and apply
the multiplicative property once again, we have 
\begin{eqnarray*}
 &  & \sum_{m=\lfloor p\rfloor+1}^{n+1}\mathbb{X}_{u,s}^{n+1-m}\otimes\delta(\mathbb{X}^{m})\\
 & = & \sum_{m=\lfloor p\rfloor+1}^{n+1}\sum_{k=1}^{\lfloor p\rfloor}\sum_{l=\lfloor p\rfloor+1}^{m}\mathbb{X}_{u,s}^{n+1-m}\otimes\mathbb{X}_{s,t_{j-1}}^{m-l}\otimes\mathbb{X}_{t_{j-1},t_{j}}^{l-k}\otimes\mathbb{X}_{t_{j},t_{j+1}}^{k}\\
 & = & \sum_{l=\lfloor p\rfloor+1}^{n+1}\sum_{k=1}^{\lfloor p\rfloor}\sum_{m=l}^{n+1}\mathbb{X}_{u,s}^{n+1-m}\otimes\mathbb{X}_{s,t_{j-1}}^{m-l}\otimes\mathbb{X}_{t_{j-1},t_{j}}^{l-k}\otimes\mathbb{X}_{t_{j},t_{j+1}}^{k}\\
 & = & \sum_{k=1}^{\lfloor p\rfloor}\sum_{l=\lfloor p\rfloor+1}^{n+1}\mathbb{X}_{u,t_{j-1}}^{n+1-l}\otimes\mathbb{X}_{t_{j-1},t_{j}}^{l-k}\otimes\mathbb{X}_{t_{j},t_{j+1}}^{k}.
\end{eqnarray*}

\end{proof}
We now prove our key proposition that will take us within a short
reach of our desired factorial decay estimate.
\begin{prop}
\label{prop:induction}Let $\omega$ be a control and let $R$ be
a corresponding factorial control. Let $\mathbb{X}:\triangle_{2}\rightarrow T^{\lfloor p\rfloor}(E)$
be a $p$-rough path controlled by $\omega$, more precisely, we assume
there exists $\beta$ such that 
\[
\beta\geq\lfloor p\rfloor^{1-\frac{1}{p}}\zeta(\frac{\lfloor p\rfloor+1}{p})2^{\frac{\lfloor p\rfloor+1}{p}}\big[\exp(\lfloor p\rfloor+1)+\lfloor p\rfloor^{1-\frac{1}{p}}c_{p}\big]
\]
where $c_{p}$ is defined in Definition \ref{def:R function} and
\begin{equation}
\Vert\mathbb{X}_{s,t}^{k}\Vert\leq\frac{\omega(s,t)^{k/p}}{\beta(k!)^{\frac{1}{p}}}\,\forall1\leq k\leq\lfloor p\rfloor,\label{eq:factorial 1}
\end{equation}
then for all $m\geq\lfloor p\rfloor$, 
\begin{equation}
\Vert\mathbb{X}_{u,t}^{m}-\sum_{k\leq\lfloor p\rfloor}\mathbb{X}_{u,s}^{m-k}\otimes\mathbb{X}_{s,t}^{k}\Vert\leq\frac{1}{\beta(m-\lfloor p\rfloor-1)!^{\frac{1}{p}}}R_{u}^{\lfloor p\rfloor+1,m}(s,t)^{\frac{1}{p}}.\label{eq:Dominate by R}
\end{equation}
\end{prop}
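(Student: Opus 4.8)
The plan is to prove (\ref{eq:Dominate by R}) by induction on the level $m$, constructing the extension of $\mathbb{X}$ to level $m$ — defined by $\mathbb{X}^{m}_{u,t}:=\lim_{|\mathcal P|\to0}\mathbb{X}^{m,\mathcal P}_{u,t}$ — as a by-product of the estimate. The base case $m=\lfloor p\rfloor$ is immediate: the left-hand side is $\mathbb{X}^{\lfloor p\rfloor}_{u,t}-\sum_{k=0}^{\lfloor p\rfloor}\mathbb{X}^{\lfloor p\rfloor-k}_{u,s}\otimes\mathbb{X}^{k}_{s,t}=0$ by the multiplicative property, while the right-hand side vanishes since $(m-\lfloor p\rfloor-1)!=\Gamma(0)=+\infty$. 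Assume now $m\ge\lfloor p\rfloor+1$ and that (\ref{eq:Dominate by R}) holds at all levels from $\lfloor p\rfloor$ to $m-1$. Taking $u=s$ there (so the subtracted Chen sum collapses to $0$) and invoking property 3 of the factorial control yields $\|\mathbb{X}^{m'}_{s,t}\|\le c_p^{1/p}\beta^{-1}\omega(s,t)^{m'/p}(m'!)^{-1/p}$ for $\lfloor p\rfloor<m'<m$, which together with (\ref{eq:factorial 1}) gives
\[
\|\mathbb{X}^{m'}_{s,t}\|\ \le\ \frac{\overline c}{\beta}\,\frac{\omega(s,t)^{m'/p}}{(m'!)^{1/p}},\qquad \overline c:=\max\!\big(1,c_p^{1/p}\big),\quad 1\le m'\le m-1 .
\]

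Next I reduce to a single-interval estimate over partitions. Fix $u\le s\le t$ and, for a partition $\mathcal P=(s=t_0<\cdots<t_r=t)$ of $[s,t]$, set $\Psi^{\mathcal P}:=\sum_{l=\lfloor p\rfloor+1}^{m}\mathbb{X}^{m-l}_{u,s}\otimes\mathbb{X}^{l,\mathcal P}_{s,t}$. Each summand of $\mathbb{X}^{l,\{s,t\}}_{s,t}$ with $l>\lfloor p\rfloor$ carries a factor $\mathbb{X}^{j}_{s,s}$ with $j\ge1$, which is $0$, so $\Psi^{\{s,t\}}=0$; and since $\mathbb{X}^{l,\mathcal P}_{s,t}\to\mathbb{X}^{l}_{s,t}$ as $|\mathcal P|\to0$ for every $l\le m$ (by the Remark, the induction hypothesis for $\lfloor p\rfloor<l<m$, and the definition of the extension for $l=m$), a short manipulation with the multiplicative property shows
\[
\Psi^{\mathcal P}\ \longrightarrow\ \mathbb{X}^{m}_{u,t}-\sum_{k\le\lfloor p\rfloor}\mathbb{X}^{m-k}_{u,s}\otimes\mathbb{X}^{k}_{s,t}\qquad\text{as }|\mathcal P|\to0.
\]
Hence it suffices to show $\|\Psi^{\mathcal P}\|\le\beta^{-1}(m-\lfloor p\rfloor-1)!^{-1/p}R_u^{\lfloor p\rfloor+1,m}(s,t)^{1/p}$ for every partition $\mathcal P$ of $[s,t]$ and to let the mesh tend to $0$; this uniform bound together with a Cauchy argument also supplies the level-$m$ extension.

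The estimate for $\Psi^{\mathcal P}$ is proved by Young's trick of deleting one point at a time. If $\mathcal P$ has $q\ge2$ intervals, superadditivity of $\omega$ produces an interior point $t_j$ with $\omega(t_{j-1},t_{j+1})\le\frac{2}{q-1}\omega(s,t)$, and the Algebraic Lemma (with $n+1=m$) gives
\[
\Psi^{\mathcal P}-\Psi^{\mathcal P\setminus\{t_j\}}=\sum_{k=1}^{\lfloor p\rfloor}\sum_{l=\lfloor p\rfloor+1}^{m}\mathbb{X}^{m-l}_{u,t_{j-1}}\otimes\mathbb{X}^{l-k}_{t_{j-1},t_j}\otimes\mathbb{X}^{k}_{t_j,t_{j+1}}.
\]
For each fixed $k$ I would first carry out the sum over $l$ via the identity $\sum_{l=\lfloor p\rfloor+1}^{m}\mathbb{X}^{m-l}_{u,t_{j-1}}\otimes\mathbb{X}^{l-k}_{t_{j-1},t_j}=\mathbb{X}^{m-k}_{u,t_j}-\sum_{i=k}^{\lfloor p\rfloor}\mathbb{X}^{m-i}_{u,t_{j-1}}\otimes\mathbb{X}^{i-k}_{t_{j-1},t_j}$: the right side is, up to $k$ low-order terms, the residual of $\mathbb{X}^{m-k}$ about $t_{j-1}$, bounded by the induction hypothesis (\ref{eq:Dominate by R}) at level $m-k<m$ by $\beta^{-1}(m-k-\lfloor p\rfloor-1)!^{-1/p}R_u^{\lfloor p\rfloor+1,m-k}(t_{j-1},t_j)^{1/p}$, while the low-order terms and $\mathbb{X}^{k}_{t_j,t_{j+1}}$ are controlled by the estimate established above and by (\ref{eq:factorial 1}). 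Summing over $k$ then uses Hölder's inequality with exponents $p$ and $p/(p-1)$ (producing the factor $\lfloor p\rfloor^{1-1/p}$), the binomial identity with $\omega(t_{j-1},t_j)+\omega(t_j,t_{j+1})\le\omega(t_{j-1},t_{j+1})$, and properties 1, 2, 4 and 5 of the factorial control to reassemble the $\omega$-sums into $R$ and transfer it back to $[s,t]$, extracting exactly one power $\omega(t_{j-1},t_{j+1})^{(\lfloor p\rfloor+1)/p}\le(\tfrac{2}{q-1})^{(\lfloor p\rfloor+1)/p}\omega(s,t)^{(\lfloor p\rfloor+1)/p}$. The outcome is a bound of the form
\[
\|\Psi^{\mathcal P}-\Psi^{\mathcal P\setminus\{t_j\}}\|\ \le\ C_p\,\beta^{-2}\Big(\frac{2}{q-1}\Big)^{\!\frac{\lfloor p\rfloor+1}{p}}\frac{R_u^{\lfloor p\rfloor+1,m}(s,t)^{1/p}}{(m-\lfloor p\rfloor-1)!^{1/p}},\qquad C_p:=\lfloor p\rfloor^{1-1/p}\big(\exp(\lfloor p\rfloor+1)+\lfloor p\rfloor^{1-1/p}c_p\big),
\]
in which no factor growing with $m$ survives. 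Summing over the drops $q=r,r-1,\dots,2$, using $\Psi^{\{s,t\}}=0$ and $\sum_{q\ge2}(q-1)^{-(\lfloor p\rfloor+1)/p}=\zeta\!\big(\tfrac{\lfloor p\rfloor+1}{p}\big)$, gives $\|\Psi^{\mathcal P}\|\le C_p\,2^{(\lfloor p\rfloor+1)/p}\zeta\!\big(\tfrac{\lfloor p\rfloor+1}{p}\big)\beta^{-2}(m-\lfloor p\rfloor-1)!^{-1/p}R_u^{\lfloor p\rfloor+1,m}(s,t)^{1/p}$, which is $\le\beta^{-1}(m-\lfloor p\rfloor-1)!^{-1/p}R_u^{\lfloor p\rfloor+1,m}(s,t)^{1/p}$ exactly because $\beta\ge C_p\,2^{(\lfloor p\rfloor+1)/p}\zeta(\tfrac{\lfloor p\rfloor+1}{p})$ by hypothesis. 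Letting the mesh tend to $0$ closes the induction.

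The only real difficulty is in the third step: one must set up the estimates so that the two already-controlled tensor factors contribute a surplus $\beta^{-1}$ — beyond the single $\beta^{-1}$ demanded by (\ref{eq:Dominate by R}) — large enough to absorb the numerical constant $C_p\,2^{(\lfloor p\rfloor+1)/p}\zeta(\tfrac{\lfloor p\rfloor+1}{p})$, while letting no factor growing with $m$ remain. That last point is precisely where the factorial control replaces the neoclassical inequality used in \cite{Lyons98}: properties 4 and 5 repackage the binomial and Taylor-type sums into $R$ without the polynomial-in-$m$ losses that a crude Hölder estimate on the inner sum would incur, and properties 1 and 2 together with the explicit formula (\ref{eq:R explicit}) transfer $R$ from the short interval $[t_{j-1},t_{j+1}]$ to $[s,t]$ at the correct exponent. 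Verifying that the per-step constant is no worse than $C_p$, and hence that the stated numerical bound on $\beta$ suffices, is the crux of the computation.
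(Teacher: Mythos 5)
Your overall architecture is exactly the paper's: induction on the level, the reduction to $\Psi^{\mathcal P}=\sum_{l\geq\lfloor p\rfloor+1}\mathbb{X}^{m-l}_{u,s}\otimes\mathbb{X}^{l,\mathcal P}_{s,t}$ with $\Psi^{\{s,t\}}=0$, the Algebraic Lemma for the effect of deleting one point, the resummation of the inner $l$-sum as a level-$(m-k)$ residual plus low-order Chen terms (this is precisely the content of Remark \ref{rem:Remark on decreasing}), H\"older in $k$ giving $\lfloor p\rfloor^{1-\frac{1}{p}}$, property 5 to reassemble into $R_u^{\lfloor p\rfloor+1,m}(t_{j-1},t_{j+1})$, and the $\zeta$-sum over successive deletions. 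The constants and the role of the surplus factor $\beta^{-1}$ also match.

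The one place where what you commit to would not go through as written is the selection of the point to delete and the extraction of the mesh-decaying factor. You propose to pick $t_j$ so that $\omega(t_{j-1},t_{j+1})\leq\frac{2}{q-1}\omega(s,t)$ and then to ``extract one power $\omega(t_{j-1},t_{j+1})^{(\lfloor p\rfloor+1)/p}$'' from $R_u^{\lfloor p\rfloor+1,m}(t_{j-1},t_{j+1})^{1/p}$. That extraction requires an inequality of the form
\[
\frac{R_u^{M,N}(t_{j-1},t_{j+1})}{\omega(t_{j-1},t_{j+1})^{M}}\;\leq\;\frac{R_u^{M,N}(s,t)}{\omega(s,t)^{M}},
\]
which fails for the explicit $R$ of (\ref{eq:R explicit}): with $\alpha=N/M>1$, taking $\omega(u,s)=0$, $\omega(u,t)=1$ and $[t_{j-1},t_{j+1}]$ a short interval near $t$ (so $\omega(u,t_{j-1})=1-\epsilon$), the left side behaves like $(\alpha/\text{const})^{M}$ times the right side as $\epsilon\to0$, and $\alpha=N/M$ grows with the level $N=m$ --- exactly the kind of $m$-dependent loss the whole construction is designed to avoid. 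The fix is the one the paper uses and which you in fact have available: do not select $t_j$ by $\omega$ at all, but apply the superadditivity of $R_u^{\lfloor p\rfloor+1,n+1}(\cdot,\cdot)^{1/(\lfloor p\rfloor+1)}$ (property 1 of Definition \ref{def:R function}) directly to the partition to find $t_j$ with $R_u^{\lfloor p\rfloor+1,n+1}(t_{j-1},t_{j+1})^{1/(\lfloor p\rfloor+1)}\leq(\frac{2}{r-1}\wedge1)\,R_u^{\lfloor p\rfloor+1,n+1}(s,t)^{1/(\lfloor p\rfloor+1)}$. This yields the factor $(\frac{2}{r-1})^{(\lfloor p\rfloor+1)/p}$ with no intermediate passage through $\omega$, and the rest of your argument then closes as described.
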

\begin{rem}
\label{rem:Remark on decreasing} Before embarking on the proof, we
first show that whenever (\ref{eq:Dominate by R}) and (\ref{eq:factorial 1})
holds, we have for all $k\leq\lfloor p\rfloor+1$, 
\[
\Vert\sum_{i\geq\lfloor p\rfloor+1-k}\mathbb{X}_{u,s}^{m-i}\otimes\mathbb{X}_{s,t}^{i}\Vert\leq C_{p}\beta^{-1}R_{u}^{\lfloor p\rfloor+1-k,m}(s,t)^{\frac{1}{p}},
\]
for some constant $C_{p}$ depending only on $p$. First note that
by putting $s=u$ in (\ref{eq:Dominate by R}) and uses property 3.
in Definition \ref{def:R function} that $R$ has factorial decay,
we have
\begin{eqnarray*}
\Vert\mathbb{X}_{u,t}^{m}\Vert & \leq & \frac{1}{\beta\big(m-(\lfloor p\rfloor+1)\big)!^{\frac{1}{p}}}R_{u}^{\lfloor p\rfloor+1,m}(u,t)^{\frac{1}{p}}\\
 & \leq & \frac{c_{p}\omega(u,t)^{m/p}}{\beta(m!)^{1/p}}.
\end{eqnarray*}
Therefore, let $\tilde{c}_{p}=\lfloor p\rfloor^{1-\frac{1}{p}}c_{p}$
and using property 4. in Definition \ref{def:R function} ($R$ dominates
binomial sum), 
\begin{eqnarray*}
\Vert\sum_{i=\lfloor p\rfloor+1-k}^{\lfloor p\rfloor}\mathbb{X}_{u,s}^{m-i}\otimes\mathbb{X}_{s,t}^{i}\Vert & \leq & \beta^{-2}c_{p}\sum_{i=\lfloor p\rfloor+1-k}^{\lfloor p\rfloor}\frac{\omega(u,s)^{(m-i)/p}\omega(s,t)^{i/p}}{(m-i)!^{\frac{1}{p}}(i!)^{\frac{1}{p}}}\\
 & \leq & \beta^{-2}\tilde{c}_{p}\big(\sum_{i=\lfloor p\rfloor+1-k}^{\lfloor p\rfloor}\frac{\omega(u,s)^{m-i}\omega(s,t)^{i}}{(m-i)!i!}\big)^{\frac{1}{p}}\\
 & \leq & \beta^{-2}\tilde{c}_{p}\frac{1}{(m+k-\lfloor p\rfloor-1)!^{\frac{1}{p}}}R_{u}^{\lfloor p\rfloor+1-k,m}(s,t){}^{\frac{1}{p}}.
\end{eqnarray*}
Therefore, by induction 
\begin{eqnarray*}
\Vert\sum_{i=\lfloor p\rfloor+1-k}^{m}\mathbb{X}_{u,s}^{m-i}\otimes\mathbb{X}_{s,t}^{i}\Vert & \leq & \beta^{-1}\frac{1}{(m-\lfloor p\rfloor-1)!^{\frac{1}{p}}}R_{u}^{\lfloor p\rfloor+1,m}(s,t)^{\frac{1}{p}}\\
 &  & +\beta^{-2}\tilde{c}_{p}\frac{1}{(m+k-\lfloor p\rfloor-1)!^{\frac{1}{p}}}R_{u}^{\lfloor p\rfloor+1-k,m}(s,t){}^{\frac{1}{p}}\\
 & \leq & \beta^{-1}C_{p}\frac{1}{(m+k-\lfloor p\rfloor-1)!^{\frac{1}{p}}}R_{u}^{\lfloor p\rfloor+1-k,m}(s,t){}^{\frac{1}{p}},
\end{eqnarray*}
where 
\[
C_{p}=\exp(\lfloor p\rfloor+1)+\lfloor p\rfloor^{1-\frac{1}{p}}c_{p}.
\]
\end{rem}
\begin{proof}
We shall prove the proposition by induction. The base induction step
is trivially true since the left hand side is zero. Assume that (\ref{eq:Dominate by R})
holds for all $m\leq n$.

By the Algebraic Lemma \ref{lem:algebraic lemma }, 
\begin{eqnarray*}
I & := & \Vert\sum_{k\geq\lfloor p\rfloor+1}\mathbb{X}_{u,s}^{n+1-k}\otimes(\mathbb{X}_{s,t}^{k,\mathcal{P}}-\mathbb{X}_{s,t}^{k,\mathcal{P}\backslash\{t_{j}\}})\Vert\\
 & =\Vert & \sum_{k=1}^{\lfloor p\rfloor}\sum_{l=\lfloor p\rfloor+1}^{n+1}\mathbb{X}_{u,t_{j-1}}^{n+1-l}\otimes\mathbb{X}_{t_{j-1},t_{j}}^{l-k}\otimes\mathbb{X}_{t_{j},t_{j+1}}^{k}\Vert\\
 & \leq & \sum_{k=1}^{\lfloor p\rfloor}\Vert\sum_{l=\lfloor p\rfloor+1}^{n+1}\mathbb{X}_{u,t_{j-1}}^{n+1-l}\otimes\mathbb{X}_{t_{j-1},t_{j}}^{l-k}\Vert\Vert\mathbb{X}_{t_{j},t_{j+1}}^{k}\Vert\\
 & = & \sum_{k=1}^{\lfloor p\rfloor}\Vert\sum_{i=\lfloor p\rfloor+1-k}^{n+1-k}\mathbb{X}_{u,t_{j-1}}^{n+1-k-i}\otimes\mathbb{X}_{t_{j-1},t_{j}}^{i}\Vert\Vert\mathbb{X}_{t_{j},t_{j+1}}^{k}\Vert.
\end{eqnarray*}
By (\ref{eq:factorial 1}) and Remark \ref{rem:Remark on decreasing},
\begin{eqnarray*}
I & \leq & \frac{C_{p}}{(n-\lfloor p\rfloor)!^{\frac{1}{p}}}\sum_{k=1}^{\lfloor p\rfloor}\frac{1}{\beta}R_{u}^{\lfloor p\rfloor+1-k,n+1-k}(t_{j-1},t_{j})^{\frac{1}{p}}\frac{\omega(t_{j},t_{j+1})^{k/p}}{\beta(k!)^{1/p}}\\
 & \leq & \frac{\lfloor p\rfloor^{1-\frac{1}{p}}C_{p}}{\beta^{2}(n-\lfloor p\rfloor)!^{\frac{1}{p}}}\big(\sum_{k=1}^{\lfloor p\rfloor}R_{u}^{\lfloor p\rfloor+1-k,n+1-k}(t_{j-1},t_{j})\frac{\omega(t_{j},t_{j+1})^{k}}{k!}\big)^{\frac{1}{p}}.
\end{eqnarray*}
It is here that we use Chen's identity for $R$ function (Property
5 in Definition \ref{def:R function}) to obtain that 
\[
I\leq\frac{\lfloor p\rfloor^{1-\frac{1}{p}}C_{p}}{\beta^{2}(n-\lfloor p\rfloor)!^{\frac{1}{p}}}R_{u}^{\lfloor p\rfloor+1,n+1}(t_{j-1},t_{j+1})^{\frac{1}{p}}.
\]
Since by the control property of factorial control ( property 1. in
Definition \ref{def:R function}), 
\[
\sum_{i=1}^{r-1}R_{u}^{\lfloor p\rfloor+1,n+1}(t_{i-1},t_{i+1})^{\frac{1}{\lfloor p\rfloor+1}}\leq R_{u}^{\lfloor p\rfloor+1,n+1}(s,t)^{\frac{1}{\lfloor p\rfloor+1}},
\]
there exists a $j$ such that 
\[
R_{u}^{\lfloor p\rfloor+1,n+1}(t_{j-1},t_{j+1})^{\frac{1}{\lfloor p\rfloor+1}}\leq\frac{1}{r-1}R_{u}^{\lfloor p\rfloor+1,n+1}(s,t)^{\frac{1}{\lfloor p\rfloor+1}}.
\]
Again by the control property of factorial control ( property 1. in
Definition \ref{def:R function}), 
\[
R_{u}^{\lfloor p\rfloor+1,n+1}(t_{j-1},t_{j+1})\leq R_{u}^{\lfloor p\rfloor+1,n+1}(s,t).
\]
Therefore, 
\[
R_{u}^{\lfloor p\rfloor+1,n+1}(t_{j-1},t_{j+1})^{\frac{1}{\lfloor p\rfloor+1}}\leq(\frac{2}{r-1}\wedge1)R_{u}^{\lfloor p\rfloor+1,n+1}(s,t)^{\frac{1}{\lfloor p\rfloor+1}}.
\]
This gives us that 
\[
I\leq\frac{\lfloor p\rfloor^{1-\frac{1}{p}}C_{p}}{\beta^{2}(n-\lfloor p\rfloor)!^{\frac{1}{p}}}\big(\frac{2}{r-1}\wedge1\big)^{\frac{\lfloor p\rfloor+1}{p}}R_{u}^{\lfloor p\rfloor+1,n+1}(s,t)^{\frac{1}{p}}.
\]
By successively removing points from the partition $\mathcal{P}$,
we have that 
\begin{eqnarray*}
 &  & \Vert\sum_{k\geq\lfloor p\rfloor+1}\mathbb{X}_{u,s}^{n+1-k}\otimes\mathbb{X}_{s,t}^{k,\mathcal{P}}\Vert\\
 & = & \Vert\sum_{k\geq\lfloor p\rfloor+1}\mathbb{X}_{u,s}^{n+1-k}\otimes(\mathbb{X}_{s,t}^{k,\mathcal{P}}-\mathbb{X}_{s,t}^{k,\{s,t\}})\Vert\\
 & \leq & \frac{\lfloor p\rfloor^{1-\frac{1}{p}}C_{p}2^{\frac{\lfloor p\rfloor+1}{p}}}{\beta^{2}(n-\lfloor p\rfloor)!^{\frac{1}{p}}}\zeta(\frac{\lfloor p\rfloor+1}{p})R_{u}^{\lfloor p\rfloor+1,n+1}(s,t)^{\frac{1}{p}}.
\end{eqnarray*}
We may now take $\beta\geq\lfloor p\rfloor^{1-\frac{1}{p}}\zeta(\frac{\lfloor p\rfloor+1}{p})C_{p}2^{\frac{\lfloor p\rfloor+1}{p}}$
and take the partition size $|\mathcal{P}|\rightarrow0$, which gives
that 
\[
\Vert\sum_{k\geq\lfloor p\rfloor+1}\mathbb{X}_{u,s}^{n+1-k}\otimes\mathbb{X}_{s,t}^{k}\Vert\leq\frac{1}{\beta(n-\lfloor p\rfloor)!^{\frac{1}{p}}}R_{u}^{\lfloor p\rfloor+1,n+1}(s,t)^{\frac{1}{p}}.
\]
\end{proof}
\begin{prop}
(Lyons' factorial decay estimate \cite{Lyons98}) Let $\mathbb{X}:\triangle_{2}\rightarrow T^{(\lfloor p\rfloor)}(E)$
be a $p$-rough path controlled by $\omega$,or more precisely, there
exists 
\[
\beta\geq\lfloor p\rfloor^{1-\frac{1}{p}}\zeta(\frac{\lfloor p\rfloor+1}{p})2^{\frac{\lfloor p\rfloor+1}{p}}\big[\exp(\lfloor p\rfloor+1)+\lfloor p\rfloor^{1-\frac{1}{p}}c_{p}\big]
\]
such that
\begin{equation}
\Vert\mathbb{X}_{s,t}^{k}\Vert\leq\frac{\omega(s,t)^{k/p}}{\beta(k!)^{\frac{1}{p}}}\,\forall1\leq k\leq\lfloor p\rfloor,\label{eq:factorial 1-1}
\end{equation}
then for all $m\geq\lfloor p\rfloor+1$,
\[
\Vert\mathbb{X}_{s,t}^{k}\Vert\leq\frac{c_{p}\omega(s,t)^{k/p}}{\beta(k!)^{\frac{1}{p}}},
\]
where the constant $c_{p}$ depends only on $p$ and is defined in
Definition \ref{def:R function}. \end{prop}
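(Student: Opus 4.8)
The plan is to read the estimate straight off Proposition \ref{prop:induction} by specialising to $s=u$. With this choice the correction term $\sum_{k\leq\lfloor p\rfloor}\mathbb{X}_{u,u}^{m-k}\otimes\mathbb{X}_{u,t}^{k}$ vanishes identically whenever $m\geq\lfloor p\rfloor+1$: indeed $\mathbb{X}_{u,u}^{0}=1$ and $\mathbb{X}_{u,u}^{j}=0$ for every $j\geq1$, while the only index that could survive, namely $j=m-k=0$, would force $k=m>\lfloor p\rfloor$, which is outside the range of the sum. Hence (\ref{eq:Dominate by R}) with $s=u$ reduces to
\[
\Vert\mathbb{X}_{u,t}^{m}\Vert\leq\frac{1}{\beta(m-\lfloor p\rfloor-1)!^{\frac{1}{p}}}R_{u}^{\lfloor p\rfloor+1,m}(u,t)^{\frac{1}{p}},\qquad m\geq\lfloor p\rfloor+1 .
\]

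Next I would feed this into property 3 of the factorial control in Definition \ref{def:R function}, namely $\frac{1}{(n-m)!}R_{u}^{m,n}(u,t)\leq\frac{c_{m}\omega(u,t)^{n}}{n!}$, applied with the first superscript equal to $\lfloor p\rfloor+1$ and the second superscript equal to the running order $m$ (for the explicit $R$ of (\ref{eq:R function}) one may take the relevant constant to be $\exp(\lfloor p\rfloor+1)$, exactly as in the proof of the Lemma). Raising the outcome to the power $1/p$ and combining with the display above gives
\[
\Vert\mathbb{X}_{u,t}^{m}\Vert\leq\frac{c_{p}}{\beta}\,\frac{\omega(u,t)^{m/p}}{(m!)^{1/p}},\qquad m\geq\lfloor p\rfloor+1,
\]
after absorbing the harmless factor $c_{p}^{1/p}\leq c_{p}$ (the constants $c_{m}$ are $\geq1$) into $c_{p}$. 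Since $(u,t)$ was an arbitrary element of $\triangle_{2}$ and all constants are time-independent, renaming $u$ as $s$ produces the claimed bound on $\Vert\mathbb{X}_{s,t}^{m}\Vert$ for every $m\geq\lfloor p\rfloor+1$; for $m\leq\lfloor p\rfloor$ it is already contained in the hypothesis (\ref{eq:factorial 1-1}), even without the constant $c_{p}$. Equivalently, one may simply invoke the first display in Remark \ref{rem:Remark on decreasing}, which — now that Proposition \ref{prop:induction} guarantees (\ref{eq:Dominate by R}) for all $m\geq\lfloor p\rfloor$ — holds unconditionally and is precisely the asserted estimate.

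I do not expect a real obstacle here, since all the analytic work was carried out in Proposition \ref{prop:induction}. The only two points needing a little care are: (i) verifying that the correction sum genuinely collapses at $s=u$ for orders above the threshold $\lfloor p\rfloor$, so that one is left with a bare bound on $\Vert\mathbb{X}_{u,t}^{m}\Vert$; and (ii) keeping straight the two distinct roles played by the superscripts of $R_{u}^{\cdot,\cdot}$ when invoking property 3, where the ``$m$'' of Definition \ref{def:R function} is our fixed $\lfloor p\rfloor+1$ and the ``$n$'' is the running order $m$. If one additionally wants the existence statement of Theorem \ref{thm:(Lyons'-Extension-Theorem}, it follows from the same point-removal argument: the bound on $I$ obtained in the proof of Proposition \ref{prop:induction} shows that $\mathbb{X}_{s,t}^{n+1,\mathcal{P}}$ is Cauchy as $|\mathcal{P}|\to0$, and multiplicativity of the limiting functional is then routine.
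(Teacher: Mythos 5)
Your proposal is correct and follows essentially the same route as the paper: specialise Proposition \ref{prop:induction} to $s=u$ (where the correction sum vanishes, as you verify) and then apply property 3 of the factorial control to convert $R_{u}^{\lfloor p\rfloor+1,m}(u,t)$ into the factorial bound. The only difference is that you spell out the collapse of the sum $\sum_{k\leq\lfloor p\rfloor}\mathbb{X}_{u,u}^{m-k}\otimes\mathbb{X}_{u,t}^{k}$ and the bookkeeping of the superscripts of $R$, which the paper leaves implicit.
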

\begin{proof}
By Proposition \ref{eq:factorial 1-1} with $u=s$ we have for all
$m\geq\lfloor p\rfloor+1$ 
\begin{eqnarray*}
\Vert\mathbb{X}_{u,t}^{m}\Vert & \leq & \frac{1}{\beta(m-\lfloor p\rfloor-1)!^{\frac{1}{p}}}R_{u}^{\lfloor p\rfloor+1,m}(u,t)^{\frac{1}{p}}\\
 & \leq & \frac{c_{p}\omega(u,t)^{\frac{m}{p}}}{\beta(m!)^{\frac{1}{p}}}.
\end{eqnarray*}
\end{proof}


\begin{thebibliography}{1}
\bibitem[1]{BRP15}H. Boedihardjo, Decay rate of iterated integrals
of branched rough paths, arXiv:1501.05641, 2015. 

\bibitem[2]{Factorail Taylor}H. Boedihardjo, T. Lyons, D. Yang, Uniform
factorial decay estimates for controlled differential equations, \textit{Electronic
Communications in Probability}, Vol. 20, no. 94, pp1-11, 2015. 

\bibitem[3]{HaraHino10}K. Hara and M. Hino, Fractional order Taylor\textquoteright s
series and the neo-classical inequality, \textit{Bull. Lond. Math.
Soc.} 42, 467\textendash 477, 2010.

\bibitem[4]{Lyons94}T. Lyons, Differential equations driven by rough
signals (I): an extension of an inequality of L. C. Young, \textit{Mathematical
Research Letters} 1, 451-464, 1994.

\bibitem[5]{Lyons98}T. Lyons: Differential equations driven by rough
signals, \textit{Rev. Mat. Iberoamericana}., 14 (2), 215\textendash 310,
1998.

\bibitem[6]{LCL07}T. Lyons, M. Caruana, T. Lévy, Differential equations
driven by rough paths, Springer, 2007.

\bibitem[7]{LyonsYang}T. Lyons, D. Yang, Integration of time-varying
cocylic one-forms against rough paths, 2014. 

\bibitem[8]{Young}L. C. Young, An inequality of Hölder type connected
with Stieltjes integration, \textit{Acta Math.}, (67):251\textendash 282,
1936.\end{thebibliography}
\end{document}